\DeclareFontFamily{U}{txcal}{\skewchar \font =45}
\DeclareFontShape{U}{txcal}{m}{n}{<-> txr-cal}{}
\DeclareMathAlphabet{\mathcalpxtx}{U}{txcal}{m}{n}
\DeclareMathAlphabet{\mathdutchcal}{U}{dutchcal}{m}{n}
\DeclareFontFamily{U}{stixtwobb}{\skewchar\font=45}%
\DeclareFontShape{U}{stixtwobb}{m}{n}{<->\mathalfa@bbscaled stix2-mathbb}{}
\DeclareMathAlphabet{\mathbbst}{U}{stixtwobb}{m}{n}
\definecolor{babyblueeyes}{rgb}{0.54, 0.81, 0.94}
\definecolor{blizzardblue}{rgb}{0.67, 0.9, 0.93}
\definecolor{blue(munsell)}{rgb}{0.0, 0.5, 0.69}
\definecolor{bluegray}{rgb}{0.4, 0.6, 0.8}
\definecolor{bondiblue}{rgb}{0.0, 0.58, 0.71}
\definecolor{cadetblue}{rgb}{0.37, 0.62, 0.63}
\definecolor{carolinablue}{rgb}{0.6, 0.73, 0.89}
\definecolor{cinnamon}{rgb}{0.82, 0.41, 0.12}
\definecolor{darkcandyapplered}{rgb}{0.64, 0.0, 0.0}
\definecolor{darkcyan}{rgb}{0.0, 0.55, 0.55}
\definecolor{darkmidnightblue}{rgb}{0.0, 0.2, 0.4}
\definecolor{darkpastelblue}{rgb}{0.47, 0.62, 0.8}
\definecolor{frenchblue}{rgb}{0.0, 0.45, 0.73}
\DeclareMathOperator{\Lrect}{\mathsf{L}}
\DeclareMathOperator{\AFib}{\mathsf{AFib}}
\DeclareMathOperator{\Cof}{\mathsf{Cof}}
\DeclareMathOperator{\Fib}{\mathsf{Fib}}
\DeclareMathOperator{\Mcal}{\mathcalpxtx{M}}
\DeclareMathOperator{\Ncal}{\mathcalpxtx{N}}
\DeclareMathOperator{\Vcal}{\mathcalpxtx{V}}
\DeclareMathOperator{\Ho}{\mathsf{Ho}}
\DeclareMathOperator{\Hom}{\mathsf{Hom}}
 \newtheorem{thm}{Theorem}[section]
 \newenvironment{taggedtheorem}[1]
 {\taggedtheoremx}
 {\endtaggedtheoremx}
 \newtheorem{cor}[thm]{Corollary}
 \newtheorem{lem}[thm]{Lemma}
 \newtheorem{prop}[thm]{Proposition}
 \theoremstyle{definition}
 \theoremstyle{remark}
 \newtheorem{rem}[thm]{Remark}
\title{
	\begin{Large}
		\textsc{A simple characterization of Quillen adjunctions}
	\end{Large}
}
\author{Victor Carmona\thanks{The author was supported by the Max-Planck Institute for Mathematics in the Sciences in Leipzig and partially supported by the grant PID2020-117971GB-C21 funded by MCIN/AEI/10.13039/501100011033.\\
		$\,$\\
		\textit{Affiliation:} Max Planck Institut f\"ur Mathematik in den Naturwissenschaften, Inselstra\ss e 22, 04103 Leipzig, Germany.\\
		\textit{Email address:} \texttt{victor.carmona@mis.mpg.de}}
}
\date{\today}
\begin{document}

\maketitle
 
 \begin{abstract}
We observe that an enriched right adjoint functor between model categories which preserves acyclic fibrations and fibrant objects is quite generically a right Quillen functor.  
 \end{abstract}


\section{Introduction}
	In this note, we provide one of the easiest characterizations of Quillen adjunctions that may exist (Theorem \ref{thm:Criterion for model cats}). To the best of our knowledge, the only previous characterizations of Quillen pairs that have appeared in the literature are:
\begin{taggedtheorem}{\textbf{J}}\cite[Proposition 6.18]{joyal_quasicat_2007} \label{thm_Joyal} An adjunction $F\colon \Mcal\rightleftarrows \Ncal:\! U$ between model categories is a Quillen pair iff one of the following equivalent conditions holds:
	\begin{itemize}
		\item $F$ preserves acyclic cofibrations and cofibrations with cofibrant domain.
		\item $U$ preserves acyclic fibrations and fibrations with fibrant codomain.
	\end{itemize}
\end{taggedtheorem}

\begin{taggedtheorem}{\textbf{L}}\cite[Corollary A.3.7.2]{lurie_htt_2009} \label{thm_Lurie} 
If $\Mcal$ and $\Ncal$ are simplicial model categories and $\Ncal$ is a left proper model category, then a $\mathsf{sSet}$-enriched adjunction
$F\colon \Mcal\rightleftarrows\Ncal:\! U$
is a Quillen pair iff $F$ preserves cofibrations and $U$ preserves fibrant objects.
\end{taggedtheorem}

Additionally, we observe that our criterion holds in the setting of semimodel categories (Proposition \ref{prop:Criterion for leftmodel cats}). As a direct consequence of this, we obtain a criterion to construct right Quillen functors into/from a left Bousfield localization (Corollary \ref{cor: Qpairs out of semimodel LBL}). For right Quillen functors out of such a localization (in semimodel categories), this is the only known  result that might be applied due to Remark \ref{rem: No characterization of core fibrations in LBL}.

A similar version of this last result (Lemma \ref{lem:Benini-Schenkel}) greatly simplifies some arguments in an upcoming work with M.\ Benini and A.\ Schenkel about the strictification of the time-slice axiom for locally covariant AQFTs in any dimension. In \cite{benini_strictification_2023} you can learn about these words.

\paragraph*{Outline.} Section \ref{sect_Qpair between model} contains the announced characterization of Quillen pairs. Section \ref{sect_Qpair between semimodel} presents the semimodel version and the application mentioned above.

\section{The criterion for model categories}\label{sect_Qpair between model}
	Let us fix from now on a closed symmetric monoidal model category $\Vcal$ as the base for enrichment.

\begin{thm}\label{thm:Criterion for model cats} Let $F\colon \Mcal\rightleftarrows \Ncal:\! U $ be a $\Vcal$-adjunction between $\Vcal$-model categories. Assume that $\Mcal$ admits a generating set of acyclic cofibrations $J$ with cofibrant domains. Then, $F\dashv U$ is a Quillen pair iff $U$ preserves acyclic fibrations and fibrant objects.
\end{thm}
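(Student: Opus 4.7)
The plan is to run the standard reduction and reduce everything to verifying that $F(j)$ is an acyclic cofibration in $\Ncal$ for each $j\in J$. Recall that $F\dashv U$ is a Quillen pair iff $F$ preserves both cofibrations and acyclic cofibrations. The hypothesis that $U$ preserves acyclic fibrations is equivalent, by adjoint transposition of lifting properties, to $F$ preserving cofibrations; and since $J$ is a generating set of acyclic cofibrations, the only remaining condition is precisely that $F(j)$ be an acyclic cofibration in $\Ncal$ for each $j\in J$. We already know $F(j)$ is a cofibration, and the cofibrant-domain hypothesis on $J$ makes $F(j)$ a cofibration between cofibrant objects of $\Ncal$.

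The next step is to detect the weak equivalence via the enriched characterisation of weak equivalences between cofibrant objects: a map $f$ between cofibrant objects in a $\Vcal$-model category is a weak equivalence iff $\HOM_\Ncal(f, Z)$ is a weak equivalence in $\Vcal$ for every fibrant $Z\in\Ncal$. This rests on the identification $\pi_0\HOM_\Ncal(X, Z)\cong\Ho(\Ncal)(X, Z)$ for $X$ cofibrant and $Z$ fibrant, together with the fact that weak equivalences in $\Vcal$ induce bijections under $\pi_0=\Ho(\Vcal)(I,-)$, where $I$ is the monoidal unit. Fix such a fibrant $Z\in\Ncal$. The $\Vcal$-adjunction gives $\HOM_\Ncal(F(j), Z)\cong\HOM_\Mcal(j, U(Z))$; the fibrant-object hypothesis makes $U(Z)$ fibrant in $\Mcal$; and the pullback-hom form of the pushout-product axiom in $\Mcal$, applied to the acyclic cofibration $j$ and the fibration $U(Z)\to\ast$, produces an acyclic fibration $\HOM_\Mcal(B, U(Z))\to\HOM_\Mcal(A, U(Z))$ in $\Vcal$. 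That is exactly the map we needed to be a weak equivalence, so $F(j)$ is an acyclic cofibration and we are done.

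The only nontrivial ingredient is the enriched characterisation of weak equivalences invoked in the second paragraph; everything else is routine bookkeeping with adjoint transposition and the $\Vcal$-model axioms. The cofibrant-domain hypothesis on $J$ is used precisely so that $F(j)$ has cofibrant source, which is what allows this enriched criterion to apply, while the generating-set hypothesis enters only to localise the preservation test for acyclic cofibrations to elements of $J$. I expect the main obstacle to be a clean formulation and verification of that enriched characterisation inside a general closed symmetric monoidal model category $\Vcal$, rather than any intricate model-theoretic manoeuvre.
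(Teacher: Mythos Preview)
Your proof is correct and follows essentially the same route as the paper: both reduce to showing each $F(j)$ is a weak equivalence between cofibrant objects, then detect this by mapping into fibrant $Z\in\Ncal$, transpose along the $\Vcal$-adjunction to $\HOM_{\Mcal}(j,U(Z))$, and invoke the $\Vcal$-model axioms together with $U(Z)$ fibrant. The only cosmetic difference is that you phrase the detection criterion via the enriched hom and the pullback-hom axiom (yielding an acyclic fibration in $\Vcal$), whereas the paper unwinds the same content as a chain of bijections of homotopy classes $[F(B),Z]_{\Ncal}\simeq[\,\mathbb{I}_c,\HOM_{\Ncal}(F(B),Z)]_{\Vcal}\simeq[B,U(Z)]_{\Mcal}$ using a cofibrant replacement $\mathbb{I}_c$ of the unit.
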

\begin{proof} It suffices to prove $(\Leftarrow)$, i.e.\ for any  fibration $f\in\Fib_{\Ncal}$, it holds $U(f)\in \Fib_{\Mcal}$. We have to show that $\forall j\in J$, $j\boxslash U(f)$ (lifting property). By adjunction, we are reduced to prove that $F(j)$ is an acyclic fibration $\forall j$. Since $U$ preserves $\AFib$, we know that $F(j)\in \Cof$ and also that it has cofibrant domain. It remains to check that $F(j)$ is a weak equivalence. For this, we use:
	\begin{itemize}
		\item The class of weak equivalences in $\Mcal$ is the preimage of the class of isomorphisms through the localization functor $\Mcal\to \Ho\Mcal$. Hence,
		\item A map $g\colon X\to Y$ is a weak equivalence iff for any fibrant object $Z\in \Ncal$, 
		$$
		g^*\colon [Y,Z]_{\Ncal}\longrightarrow [X,Z]_{\Ncal}
		$$
        is a bijection.	
	\end{itemize} 
Let $Z\in \Ncal$ be a fibrant object and $\mathbb{I}_c\in \Vcal$ be a cofibrant replacement of the monoidal unit. The claim follows from the following commutative diagram
$$
\begin{tikzcd}[ampersand replacement=\&]
\left[F(B), Z\right]_{\Ncal}\ar[d,"F(j)^*"'] \ar[r, "\simeq"]\& \left[\,\mathbb{I}_c\otimes F(B),Z\right]_{\Ncal} \ar[r, "\simeq"]\& \left[\,\mathbb{I}_c, 
\underline{\Hom}_{\Ncal}(F(B),Z)\right]_{\Vcal}\ar[r, "\simeq"] \& \left[B,U(Z)\right]_{\Mcal}\ar[d,"j^*"]\\
\left[F(A), Z\right]_{\Ncal} \ar[r, "\simeq"']\& \left[\,\mathbb{I}_c\otimes F(A),Z\right]_{\Ncal} \ar[r, "\simeq"']\& \left[\,\mathbb{I}_c, 
\underline{\Hom}_{\Ncal}(F(A),Z)\right]_{\Vcal}\ar[r, "\simeq"'] \& \left[A,U(Z)\right]_{\Mcal}
\end{tikzcd},
$$
where we have applied: $(i)$ $A$ and $B$ are cofibrant objects (and so all the expressions that appear, e.g.\ 
$$
\mathbb{I}_c\otimes A,\quad \quad \mathbb{I}_c\otimes F(B),\quad \quad \underline{\Hom}_{\Mcal}(A,U(Z))\quad \dots
$$
are already derived), $(ii)$ the axioms of $\Vcal$-model category, $(iii)$ $F\dashv U$ is a $\Vcal$-adjunction, and $(iv)$ $j\colon A\to B$ is a weak equivalence. 
\end{proof}

\begin{rem} It is tempting to think that the argument can be generalized dropping  enrichments and the hypothesis on $J$, even its existence. The argument can be pursued up to the point where one uses some sort of adjunction between $F$ and $U$ between localizations. See the proof of Lemma \ref{lem:Benini-Schenkel}.
\end{rem}

\section{The criterion for semimodel categories}\label{sect_Qpair between semimodel}
The power of our characterization of Quillen pairs really hits when working with semimodel categories. In fact, this note was motivated by the technical problem mentioned in Remark \ref{rem: No characterization of core fibrations in LBL}. References for the theory of semimodel categories are  \cite{barwick_left_2010, carmona_when_2023}.

As one can learn from \cite{carmona_when_2023}, semimodel structure may refer to a variety of notions. In this document, we will only consider Spitzweck semimodel categories (see \cite[Definition 2.5]{carmona_when_2023}) for the sake of brevity and conciseness. 

The proof of Theorem \ref{thm:Criterion for model cats} also yields: 
\begin{prop}\label{prop:Criterion for leftmodel cats}
	 Let $F\colon \Mcal\rightleftarrows \Ncal:\! U $ be a $\Vcal$-adjunction between left $\Vcal$-semimodel categories, where $\Mcal$ has a generating set of acyclic cofibrations $J$ with cofibrant domains. Then, the following conditions are equivalent:
	\begin{itemize}
		\item[(1)] $U$ preserves (acyclic) fibrations, i.e.\ $F\dashv U$ is a Quillen pair.
		\item[(2)] $U$ preserves acyclic fibrations and fibrant objects.
	\end{itemize}
\end{prop}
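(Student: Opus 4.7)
The plan is to mimic the proof of Theorem \ref{thm:Criterion for model cats} almost verbatim, taking care that every step remains valid in the Spitzweck semimodel framework. Only $(2)\Rightarrow(1)$ needs argument, and it is enough to upgrade preservation of acyclic fibrations to preservation of arbitrary fibrations. So, given $f \in \Fib_{\Ncal}$, I must verify $j \boxslash U(f)$ for every $j \in J$, which by the $\Vcal$-adjunction translates to $F(j) \boxslash f$. Since in a Spitzweck semimodel category the fibrations are precisely the maps with the right lifting property against acyclic cofibrations with cofibrant domain, it suffices to prove that $F(j)$ is such a map.

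The reduction then proceeds exactly as before: since $U$ preserves acyclic fibrations, $F$ preserves cofibrations, and since $F$ also preserves the initial object, the domain of $F(j)$ is cofibrant. Thus $F(j)$ is a cofibration between cofibrant objects, and it only remains to prove it is a weak equivalence. For this, I would reuse the diagram from the proof of Theorem \ref{thm:Criterion for model cats}: for any fibrant $Z \in \Ncal$ and cofibrant replacement $\mathbb{I}_c$ of the monoidal unit of $\Vcal$, the cofibrancy of $A$, $B$, $F(A)$, $F(B)$ ensures that every tensor and every internal hom appearing is already the derived one, the $\Vcal$-model-category axioms give the horizontal isomorphisms, the $\Vcal$-adjunction identifies the right-hand column with $j^*$ on $[-,U(Z)]_{\Mcal}$, and $U(Z)$ is fibrant by hypothesis so that $j^*$ is a bijection.

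The main obstacle is justifying that the two bulleted facts invoked at the end of the proof of Theorem \ref{thm:Criterion for model cats} survive the passage to Spitzweck semimodel categories. Concretely, I need that a map between cofibrant objects of $\Mcal$ is a weak equivalence if and only if it induces bijections on $[-,Z]_{\Mcal}$ for every fibrant $Z$. This is standard once one has the homotopy category of a Spitzweck semimodel category together with the usual cylinder/path calculus on cofibrant objects; I would cite \cite{barwick_left_2010, carmona_when_2023} for the relevant foundational statements. Everything else in the proof of Theorem \ref{thm:Criterion for model cats} is purely formal (adjunction, enrichment, lifting properties) and transfers without change.
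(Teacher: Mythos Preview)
Your proposal is correct and follows exactly the approach the paper intends: the paper's entire proof is the sentence ``The proof of Theorem \ref{thm:Criterion for model cats} also yields [this proposition]'', and you have carefully unpacked why each step survives in the Spitzweck setting. One small caution on wording: you write that in a Spitzweck semimodel category ``the fibrations are \emph{precisely} the maps with the right lifting property against acyclic cofibrations with cofibrant domain'', but as the paper's Remark immediately following the proposition points out, this characterization can fail in general; fortunately your argument only uses the direction that \emph{is} an axiom (fibrations lift against such maps), together with the hypothesis that $\Fib_{\Mcal}=J^{\boxslash}$, so the logic is sound.
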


\begin{rem} Recall that, in general, fibrations in a left semimodel category are not characterized by a lifting property (see \cite[Lemma 1.7 and Corollary 1.8]{barwick_left_2010}). 
\end{rem}

\paragraph*{Application: Quillen adjunctions out of Bousfield localizations}$\,$\newline
Existence results for left Bousfield localizations of semimodel categories can be found in \cite{white_left_2020,carmona_when_2023}.

For ease of exposition, we assume that the left semimodel categories in the following result are tractable, i.e.\ they are cofibrantly generated by sets of arrows with cofibrant domains.\footnote{Also because tractability is one necessary condition for \cite[Theorem A]{white_left_2020} and \cite[Theorem B]{carmona_when_2023}.}
\begin{cor}\label{cor: Qpairs out of semimodel LBL} Let $\Mcal$, $\Ncal$ and $\Ncal'$ be tractable left $\Vcal$-semimodel categories and $S$ be a set of maps in $\Mcal$. Assume that the $\Vcal$-enriched left Bousfield localization\footnote{ See \cite[Theorem 4.46]{barwick_left_2010} and the subsection that contains it for definitions about enriched Bousfield localization.} $\Lrect_S\Mcal$ exists as a tractable left $\Vcal$-semimodel category (e.g.\ $\Mcal$ is cellular or combinatorial). Consider that we are given right $\Vcal$-adjoint functors $U\colon \Ncal \to \Mcal$ and $U'\colon \Mcal\to \Ncal'$. Then,
 \begin{itemize}
			\item[$(a)$] $U$ induces a  Quillen pair $F\colon  \Lrect_{S}\Mcal\rightleftarrows \Ncal:\! U$ iff $U$ preserves $\Fib$ and  sends fibrant objects to fibrant $S$-local objects.
			\item[$(b)$] $U'$ induces a  Quillen pair $F'\colon \Ncal'\rightleftarrows \Lrect_{S}\Mcal:\! U'$ iff $U'$ preserves $\Fib$ and sends fibrant $S$-local objects to fibrant objects.
		\end{itemize}
\end{cor}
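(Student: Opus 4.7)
The plan is to apply Proposition \ref{prop:Criterion for leftmodel cats} to each of the two adjunctions in question, leveraging two standard facts about a left Bousfield localization $\Lrect_S\Mcal$: that its acyclic fibrations coincide with those of $\Mcal$ (so $\AFib_{\Lrect_S\Mcal} = \AFib_\Mcal$), and that its fibrant objects are precisely the $S$-local fibrant objects of $\Mcal$. Tractability of $\Lrect_S\Mcal$ and of $\Ncal'$ supplies the generating set of acyclic cofibrations with cofibrant domains required on the left-hand side of each adjunction. Dually to the first fact, the identity $\Mcal \to \Lrect_S\Mcal$ is left Quillen and the identity $\Lrect_S\Mcal \to \Mcal$ is right Quillen.

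For part $(a)$, I would apply Proposition \ref{prop:Criterion for leftmodel cats} to $F \dashv U\colon \Lrect_S\Mcal \rightleftarrows \Ncal$, which reduces the Quillen pair condition to: $U$ preserves $\AFib_{\Lrect_S\Mcal} = \AFib_\Mcal$, and $U$ sends fibrant objects of $\Ncal$ to fibrant objects of $\Lrect_S\Mcal$, i.e., to $S$-local fibrant objects of $\Mcal$. This matches the stated criterion. For the $(\Rightarrow)$ direction I would additionally recover $U$ as a right Quillen functor into the unlocalized $\Mcal$ by composing $U\colon \Ncal \to \Lrect_S\Mcal$ with the right Quillen identity $\Lrect_S\Mcal \to \Mcal$, which together with preservation of acyclic fibrations yields the full ``preserves $\Fib$'' assertion.

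Part $(b)$ proceeds analogously. Applying Proposition \ref{prop:Criterion for leftmodel cats} to $F' \dashv U'\colon \Ncal' \rightleftarrows \Lrect_S\Mcal$ rewrites the Quillen pair condition as: $U'$ preserves $\AFib_{\Ncal'}$, together with $U'$ sending fibrant objects of $\Lrect_S\Mcal$, namely $S$-local fibrant objects of $\Mcal$, to fibrant objects of $\Ncal'$. I expect the main subtlety to be ensuring that ``$U'$ preserves $\Fib$'' is phrased in a way that is verifiable in the unlocalized $\Mcal$ and consistent with the obstruction noted in Remark \ref{rem: No characterization of core fibrations in LBL}: fibrations in $\Lrect_S\Mcal$ are not characterized by a lifting property, so the entire criterion must be framed in terms of the unlocalized model structure on $\Mcal$.
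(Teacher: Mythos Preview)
Your approach is correct and is precisely the paper's: a direct application of Proposition \ref{prop:Criterion for leftmodel cats} together with the identification of fibrant objects in $\Lrect_S\Mcal$ with fibrant $S$-local objects (and of $\AFib_{\Lrect_S\Mcal}$ with $\AFib_\Mcal$). The paper's proof is literally that one line; your additional care about tractability supplying the generating acyclic cofibrations, the $\Fib$-versus-$\AFib$ matching via composition with the right Quillen identity $\Lrect_S\Mcal\to\Mcal$, and the phrasing of $(b)$ in terms of the unlocalized structure are elaborations the paper leaves implicit.
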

\begin{proof} Direct application of Proposition \ref{prop:Criterion for leftmodel cats}. Being fibrant in $\Lrect_S\Mcal$ and being fibrant $S$-local are equivalent conditions (see the last paragraph of \cite[Proof of Theorem 4.2]{white_left_2020} and \cite[Proposition 5.9]{carmona_when_2023}).
\end{proof}
\begin{rem}
$(b)$ is not only new, but necessary to find right Quillen functors with domain  $\Lrect_S\Mcal$. The problem is the lack of a characterization for core fibrations in $\Lrect_T\Ncal$. The best we have is a characterization of core fibrations with \textbf{co}fibrant domain (\cite[Remark 4.5]{white_left_2020}).
\end{rem}

A similar result, although not requiring $\Vcal$-enrichment, is:
\begin{lem}\label{lem:Benini-Schenkel} Let $F\colon\Mcal\rightleftarrows \Ncal:\! U$ be a Quillen pair between tractable left semimodel categories. Assume we are given sets of maps $S$ and $T$ of $\Mcal$ and $\Ncal$ respectively, such that $\Lrect_S\Mcal$ and $\Lrect_T\Ncal$ exist as tractable left semimodel categories. Then, $F\colon L_S\Mcal\rightleftarrows \Lrect_T\Ncal:\! U$ is a Quillen pair iff $U$ sends $T$-local objects to $S$-local objects.	
\end{lem}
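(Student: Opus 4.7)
The plan is to adapt the strategy of Proposition \ref{prop:Criterion for leftmodel cats}, substituting the $\Vcal$-enrichment step by the derived adjunction that the original Quillen pair $F\dashv U$ between $\Mcal$ and $\Ncal$ already supplies. The forward implication is immediate: a right Quillen functor preserves fibrant objects, and the fibrant objects of $L_T\Ncal$ (resp.\ $\Lrect_S\Mcal$) are exactly the fibrant $T$-local (resp.\ $S$-local) objects of $\Ncal$ (resp.\ $\Mcal$), so $U$ must send the former to the latter.

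For the converse, first observe that acyclic fibrations are unchanged under left Bousfield localization, being detected by right lifting against cofibrations, which are not modified; hence $U$ already preserves acyclic fibrations between the localizations. To show that $U$ preserves fibrations, pick by tractability a generating set $J$ of acyclic cofibrations of $\Lrect_S\Mcal$ with cofibrant domains. By adjunction, it suffices to prove that $F(j)$ is an acyclic cofibration of $\Lrect_T\Ncal$ for every $j\colon A\to B$ in $J$. Cofibrations coincide in $\Ncal$ and $\Lrect_T\Ncal$, so $F(j)$ is automatically a cofibration between cofibrant objects (since $A$ is cofibrant and $B$ is the target of a cofibration out of a cofibrant object); only the $T$-local equivalence condition remains.

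To verify this, characterize $T$-local equivalences between cofibrant objects via bijectivity of $[-,Z]_{\Ho \Ncal}$ for every fibrant $T$-local $Z\in\Ncal$. For such a $Z$, by hypothesis $U(Z)$ is $S$-local, and by the original Quillen pair $U(Z)$ is fibrant in $\Mcal$, hence fibrant in $\Lrect_S\Mcal$. The derived adjunction for $F\dashv U$, together with the reflective identifications $[-,Z]_{\Ho \Lrect_T\Ncal}\cong [-,Z]_{\Ho \Ncal}$ and $[-,U(Z)]_{\Ho \Mcal}\cong [-,U(Z)]_{\Ho \Lrect_S\Mcal}$ (valid on cofibrant inputs in $\Mcal$), then assemble into a natural commutative diagram identifying $F(j)^{*}$ on $\Ho\Ncal$-classes into $Z$ with $j^{*}$ on $\Ho \Lrect_S\Mcal$-classes into $U(Z)$. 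Since $j$ is a weak equivalence in $\Lrect_S\Mcal$, the latter is a bijection, hence so is the former, proving $F(j)$ is a $T$-local equivalence.

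The main delicate point is the semimodel bookkeeping: confirming that fibrations of the tractable left semimodel category $\Lrect_S\Mcal$ are genuinely detected by $J$ (which is fine since $J$ has cofibrant domains, matching the Spitzweck lifting axiom), and that the derived adjunction together with the reflective identifications with $\Ho \Lrect_S\Mcal$ and $\Ho \Lrect_T\Ncal$ behave as expected on the relevant cofibrant/local objects. Both facts are standard in the tractable setting but must be invoked explicitly.
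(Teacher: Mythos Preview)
Your proposal is correct and follows essentially the same route as the paper: reduce to showing that $F(j)$ is a $T$-local equivalence for $j$ in a generating set of acyclic cofibrations with cofibrant domains, then use the derived adjunction of the \emph{original} Quillen pair $F\dashv U$ together with the full-subcategory identifications $\Ho\Lrect_T\Ncal\hookrightarrow\Ho\Ncal$ and $\Ho\Lrect_S\Mcal\hookrightarrow\Ho\Mcal$ to compare $F(j)^*$ with $j^*$. You are somewhat more explicit than the paper about the forward implication, about why $U$ automatically preserves acyclic fibrations between the localizations, and about the semimodel bookkeeping, but these are elaborations of the same argument rather than a different strategy.
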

\begin{proof} As in the proof of Theorem \ref{thm:Criterion for model cats}, we are reduced to check that $F(j)$ is a weak equivalence when $j\in J$, generating set of acyclic cofibrations in $L_S\Mcal$. Since $F\colon \Mcal \rightleftarrows \Ncal:\! U$ is already a Quillen adjunction, we have a commutative diagram
	$$
	\begin{tikzcd}[ampersand replacement=\&]
		\left[F(B), Z\right]_{\Lrect_T\Ncal}\ar[d,"F(j)^*"'] \ar[r, equal]\& \left[F(B), Z\right]_{\Ncal} \ar[r, "\simeq"]\& \left[B, U(Z)\right]_{\Mcal}\ar[r, equal] \& \left[B,U(Z)\right]_{\Lrect_S\Mcal}\ar[d,"j^*"]\\
		\left[F(A), Z\right]_{\Lrect_{T}\Ncal} \ar[r, equal]\& \left[F(A), Z\right]_{\Ncal} \ar[r, "\simeq"']\& \left[A, U(Z)\right]_{\Mcal}\ar[r, equal] \& \left[A,U(Z)\right]_{\Lrect_S\Mcal}
	\end{tikzcd}
	$$
for any fibrant $T$-local object $Z\in \Ncal$ (we use $(i)$ and $(iv)$ from the proof of Theorem \ref{thm:Criterion for model cats}). Note that we also used that $\Ho \Lrect_T\Ncal$ (resp.\ $\Ho \Lrect_T\Mcal$) is a full subcategory of $\Ho\Ncal$ (resp.\ $\Ho\Mcal$). We conclude the result as in Theorem \ref{thm:Criterion for model cats} because being fibrant in $\Lrect_S\Mcal$ and being fibrant $S$-local are equivalent conditions
\end{proof}

\begin{rem}\label{rem: No characterization of core fibrations in LBL} This lemma is equivalent to the proof of the universal property in \cite[Theorem 4.3]{white_left_2020}. For one implication, just take $T=\emptyset$, and for the other, apply the universal property to the composition 
	$
\Mcal\rightleftarrows\Ncal\rightleftarrows \Lrect_T\Ncal.	
	$
 The cited result is  the source of inspiration for Theorem \ref{thm:Criterion for model cats}. We have presented Lemma \ref{lem:Benini-Schenkel} in this form for future reference. 
\end{rem}

\section*{Acknowledgments}
The author would like to thank David White for 
pointing out Theorem \ref{thm_Lurie} to the author and for his always kind correspondence.

\bibliographystyle{acm}

\end{document}